\documentclass[11pt]{article}

\usepackage{amsmath,amssymb,amsthm}
\usepackage{geometry}
\usepackage{booktabs}
\usepackage{tikz}
\usetikzlibrary{arrows.meta,positioning,calc}
\usepackage[colorlinks=true,linkcolor=blue,citecolor=blue]{hyperref}
\usepackage[capitalize,noabbrev]{cleveref}

\usepackage{amsmath,amssymb,amsthm}
\usepackage{geometry}
\usepackage{graphicx}
\usepackage{enumitem}
\usepackage[dvipsnames]{xcolor}
\usepackage[colorlinks=true,linkcolor=blue,citecolor=blue]{hyperref}
\usepackage[capitalize,noabbrev]{cleveref}

\theoremstyle{plain}
\newtheorem{theorem}{Theorem}[section]
\newtheorem{lemma}[theorem]{Lemma}

\theoremstyle{definition}
\newtheorem{definition}[theorem]{Definition}
\newtheorem{example}[theorem]{Example}

\newcommand{\R}{\mathbb R}

\newcommand{\bz}{\mathbf{z}}

\newcommand{\sV}{\mathsf{V}}
\newcommand{\sE}{\mathsf{E}}
\newcommand{\kk}{\kappa}
\newcommand{\chibp}{\chi_{\mathrm{hub}}}
\newcommand{\bv}{\mathbf{v}}
\newcommand{\bc}{\mathbf{b}}

\title{On Transformer Dynamics}
\author{Mohammad Javad Latifi Jebelli\\
  Department of Mathematics, Dartmouth College, Hanover, NH, USA\\ \\
  }

\date{}

\begin{document}
\maketitle

\begin{abstract}
We develop a geometric framework in which the token dynamics of a transformer
are modeled by a system of interacting particles on a Riemannian manifold
$\mathcal M$, the attention mechanism being encoded by a time-independent
two-body interaction law, that is, a section of the pullback bundle
$\pi_2^{*}(T\mathcal M)$ over $\mathcal M\times\mathcal M$. Within this framework
we isolate two features that a family of interaction laws must possess in order
to model language: it must realize generic nonlocal and nonreciprocal forces,
and it must parametrize vector fields on a high-dimensional manifold
efficiently. We show that both features are achieved simultaneously in a transformer model. Our main
theorem produces a finitely parametrized family of interaction laws that is \emph{universal}: it
realizes an arbitrary prescribed attention digraph. Moreover, we show that the
cost of realizing a given attention digraph is governed by two combinatorial invariants of the digraph, namely its
biclique cover number, which we identify with the least number of hubs in a
hub extension, and its hub-chromatic index.
\end{abstract}

\noindent\textbf{Mathematics Subject Classification:}
68T07;
05C20, 05C15, 05C70, 53C99, 82C22.

\section{Introduction}

There is at present no transparent way to visualize how a large language model
(LLM) produces coherent text. One may nonetheless ask for a mathematical
account of the capacity of such models to express rich structure: what is the
source of the expressive power of transformer models, and to what extent can it
be explained? The present paper is an attempt to make progress on these
questions.

Sequence generation in an LLM may be viewed as a complex particle interaction
on a high-dimensional space \cite{TR}, the interaction being governed by a rich
family of vector fields. Our central contention is that such a family must have
two essential features. First, it must be able to encode generic nonlocal and
nonreciprocal forces: a particle should be able to exert, on particles lying in
remote regions of the space, a force of arbitrary direction. Second, it must
provide an efficient parametrization of vector fields on a high-dimensional
space. The second requirement is severe. Even a coarse Fourier resolution of
functions on a space of several thousand dimensions already involves a number
of degrees of freedom far beyond what could be parametrized directly.%
\footnote{For instance, the polynomials of degree at most $10$ on
$\mathbb R^{10000}$ already form a space of dimension $\binom{10010}{10}$.}
We introduce a geometric framework for transformer models on a class of
manifolds and prove that it enjoys both features. In particular the framework
encodes generic nonlocal and nonreciprocal forces through what we call
universal families of interaction fields; and, more to the point, it represents
such families using a modest number of parameters.

Consider a manifold $\mathcal M$ and fix a discrete subset
$\mathcal V\subset\mathcal M$, the embedded vocabulary. From the dynamical point
of view, a language model constructs sequences
$x_1,x_2,\ldots\in\mathcal V$ by geometric dynamics on $\mathcal M$: given a
partially generated sequence $x_1,\ldots,x_N\in\mathcal V$, one seeks the
conditional probability
\begin{equation}\label{eq:conditional}
    \mathcal P\bigl(x_{N+1}\mid x_1,\ldots,x_N\bigr)
\end{equation}
and predicts the $(N+1)$-th token $x_{N+1}$. To build such conditional
probabilities, consider a system of $N$ interacting particles on $\mathcal M$
with initial conditions $x_1,\ldots,x_N$, evolving under a complex interaction
law; here $\mathcal M$ plays the role of the configuration space of a single
particle. Having evolved the system up to some time $T>0$, we take the terminal
position of particle $i$ to encode a probabilistic prediction of the token
$x_{i+1}$; in particular, the terminal state of the last particle yields the
prediction of $x_{N+1}$ in \eqref{eq:conditional}. This per-particle reading is
consistent with the causal structure of the dynamics introduced below, in which
particle $i$ is influenced only by the particles $z_j$ with $j\le i$.

\begin{figure}[ht]
    \centering
    \includegraphics[width=0.35\linewidth]{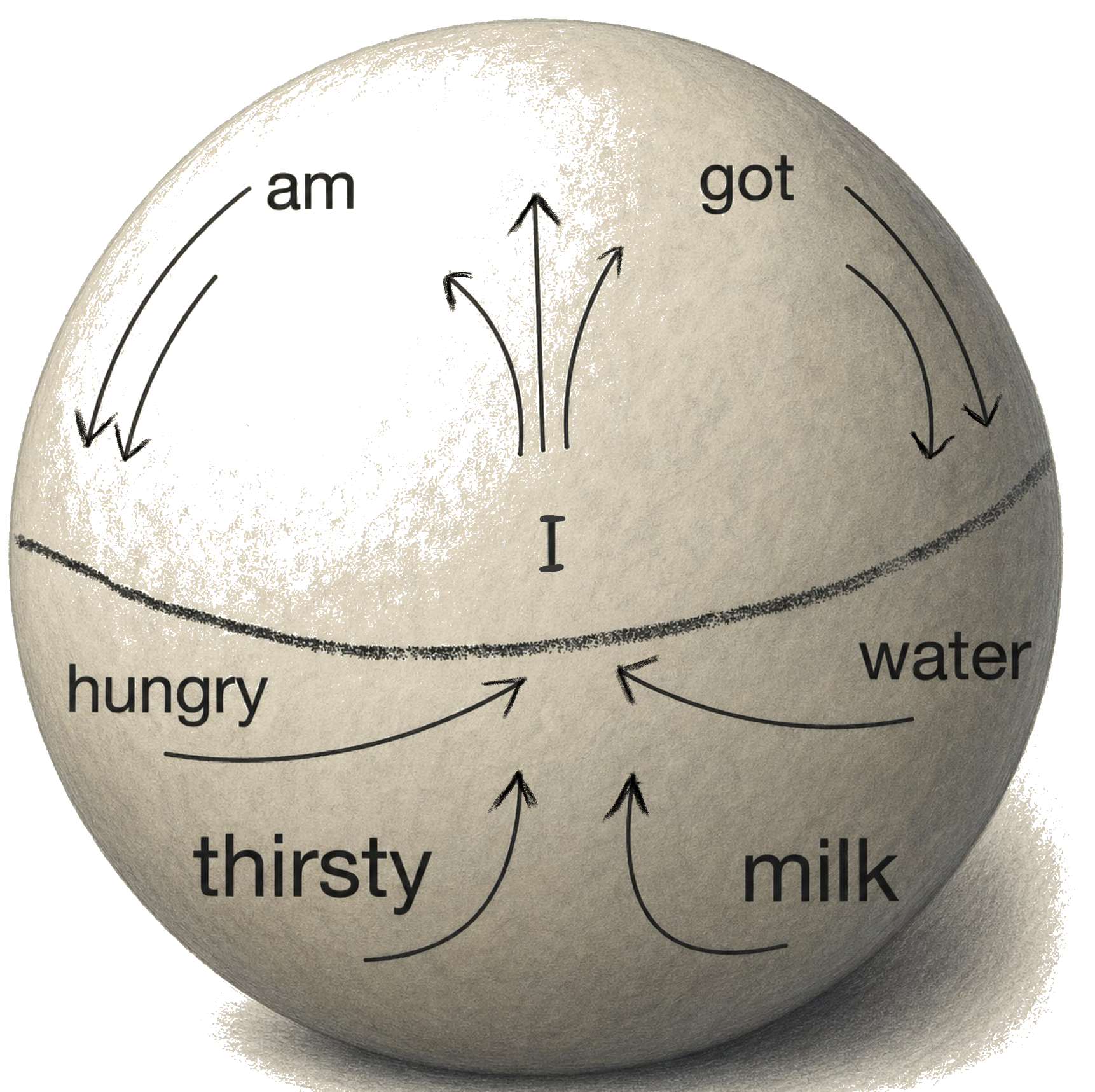}
    \caption{A toy (incomplete) picture of text generation. The ambient space $\mathcal M$
    is the two-sphere, and the finite vocabulary $\mathcal V\subset\mathcal M$
    consists of seven embedded tokens. The admissible language is generated from
    four elementary sentences---``I am hungry'', ``I am thirsty'', ``I got
    milk'', ``I got water''---sufficient for a rudimentary infant--parent
    exchange. Given the initial token ``I'', the transformer-induced dynamics
    moves the corresponding state toward the regions associated with the
    admissible continuations ``am'' and ``got''; the next token is then drawn at
    random from these. Repeating the procedure conditionally on the selected
    token yields a random discrete trajectory through $\mathcal V$, and hence a
    generated sentence.}
    \label{fig:sphere}
\end{figure}

To make the discussion precise we first define a two-body interaction. Let
$\pi_i:\mathcal M\times\mathcal M\to\mathcal M$ be the projections
$\pi_1(x,y)=x$ and $\pi_2(x,y)=y$, and let $E=\pi_2^{*}(T\mathcal M)$ be the
pullback of the tangent bundle, a vector bundle over $\mathcal M\times\mathcal M$.

\begin{definition}
A \emph{two-body interaction law} assigns to every ordered pair of points
$(x,y)\in\mathcal M\times\mathcal M$ a tangent vector
\[
u(x,y)\in T_y\mathcal M.
\]
Equivalently, a two-body interaction law is a section of $E=\pi_2^{*}(T\mathcal M)$,
and a family of interaction laws is a subset of $\Gamma(E)$.
\end{definition}

The evolution of a physical particle system is governed by Hamiltonian vector
fields on a symplectic manifold. The families of interaction vector fields
considered here are not physical in this sense; they are prescribed by vector
fields on a general manifold $\mathcal M$. The particle perspective is
nonetheless a fruitful one.

The dynamics of $N$ interacting particles with initial points
$x_1,\ldots,x_N\in\mathcal M$ is
\begin{equation}\label{eq:dynamics}
\frac{dz_i(t)}{dt}=\nu_0\bigl(z_i(t)\bigr)+\sum_{j=1}^{i}u\bigl(z_j(t),z_i(t)\bigr),
\qquad z_i(0)=x_i\in\mathcal M,\qquad i=1,\ldots,N,
\end{equation}
where $\nu_0\in\Gamma(T\mathcal M)$ is a fixed vector field (accounting for the
multilayer perceptron) and $u\in\Gamma(E)$ encodes the interaction law
(corresponding to the attention mechanism). A richer version of the interaction
is expressed through several time-dependent families of interaction laws,
\begin{equation}\label{eq:dynamics-multi}
\frac{dz_i(t)}{dt}=\nu_0^{t}\bigl(z_i(t)\bigr)
+\sum_{j=1}^{i}u^{t}_{\bz}\bigl(z_j(t),z_i(t)\bigr),
\end{equation}
where $u^{t}_{\bz}\in\Gamma(E)$ now depends on $\bz=(z_1,\ldots,z_N)$
through softmax normalization and the superscript $t$ records the time dependence. This time dependence corresponds to stacking multiple layers in a transformer model: discretizing such a time-dependent differential equation yields precisely the multilayer formulation of transformers (a general instance of which is given in \cite{neuralode}). On the other hand, combining multiple interaction laws models multi-head attention. In general, a transformer is built from such systems of
interaction laws; the aim of this paper is to understand its basic building
blocks. Specifically, we ask how a single time-independent interaction law $u$
can encode a rich attention mechanism. We do not discuss other aspects of the transformer model such as multilayers or the multilayer perceptron.

\subsection*{Attention Digraph}
An \emph{attention digraph}
is simply a digraph $(\sV,\sE)$. The underlying struture of transformers encode an enormous attention digraph that leads to a complex particle
dynamic on a high-dimensional manifold. We discuss an explicit mathematical procedure to implement such digraph optimally using a family of interaction laws (here $\sV$ is the node set of the attention graph;
it is not to be confused with the embedded vocabulary $\mathcal V\subset\mathcal M$
of the introduction). \\ 

A \emph{biclique} of $(\sV,\sE)$ is a pair $A,B\subseteq\sV$
with $A\times B\subseteq\sE$; the \emph{biclique cover number} $\bc(\sV,\sE)$ is
the least number of bicliques whose union is $\sE$.

\begin{definition}\label{def:hubextension}
A \emph{hub extension} of a digraph $(\sV,\sE)$ is a digraph
$(\widetilde\sV,\widetilde\sE)$ with hub set $H$, where
$\widetilde\sV=\sV\cup H$, such that
\begin{enumerate}
\item $(a,b)\in\sE$ if and only if there is a hub $h\in H$ with
$(a,h),(h,b)\in\widetilde\sE$; and
\item every edge of $\widetilde\sE$ has the form $(a,h)$ or $(h,a)$ for some
$h\in H$ and $a\in\sV$.
\end{enumerate}
\end{definition}

In graph-theoretic terms, a hub extension is the directed analogue of an
intersection graph for $(\sV,\sE)$---more precisely, a connection digraph---but
we adopt the hub-extension terminology as more intuitive. Each hub covers the
directed edges of a directed biclique of $(\sV,\sE)$, and, by
Lemma \ref{lem:biclique}, the least number of hubs in a hub extension of $(\sV,\sE)$
equals its biclique cover number. \\
 
A \emph{coloring} of a hub extension assigns a color to each hub so that any
two hubs sharing a source (non-hub) vertex, or sharing a target (non-hub)
vertex, receive distinct colors (or equivalently, a \emph{coloring} of a hub extension is a coloring of the edges $\widetilde\sE$
subject to the following rules: all edges incident to a common hub $h\in H$
receive the same color; and at every non-hub node any two incoming edges receive
distinct colors, as do any two outgoing edges). The chromatic index of a hub extension is then defined to be the minimum number of colors needed for such coloring. 

\begin{definition}
The \emph{hub-chromatic index} $\chibp(\sV,\sE)$ of a digraph $(\sV,\sE)$ is the
minimum, over all hub extensions of $(\sV,\sE)$, of the chromatic index of the
hub extension.
\end{definition}

 For comparison we also
record the ordinary chromatic index $\chi'(\sV,\sE)$: the least number of
colors in a coloring of the edges of $(\sV,\sE)$ itself under the rule that
two edges sharing a source or a target receive distinct colors.  We have $\chibp(\sV,\sE)\le\chi'(\sV,\sE)$. \\

\textbf{Remark.} What is the mathematical idea that supports the use of key, query, and 
value matrices in transformer models? For instance, what happens if we fix the key matrix 
in the transformer architecture to be the identity matrix? As we will see, in that case, $\chi'(\sV,\sE)$ 
becomes the essential invariant of the attention digraph instead of $\chibp(\sV,\sE)$ 
(which in practice can be significantly larger for large digraphs). Consequently, hub 
extensions play a crucial role in explaining the expressive power of transformers.\\

Now, we state two examples of digraphs and their invariants. 

\begin{example}\label{ex:triangle}
Let $\sV=\{1,2,3\}$ and $\sE=\{(1,2),(1,3),(2,3)\}$: node $1$ attends to both
$2$ and $3$, and $2$ additionally attends to $3$---the transitive tournament
on three tokens, the smallest digraph in which one token is influenced both
directly and through an intermediary.
\end{example}
 
\begin{center}
\begin{tikzpicture}[scale=1,>=Stealth,every node/.style={font=\small}]
\node[circle,draw,fill=blue!10,minimum size=7mm,inner sep=0pt] (1) at (0,2) {$1$};
\node[circle,draw,fill=blue!10,minimum size=7mm,inner sep=0pt] (2) at (-1.3,0) {$2$};
\node[circle,draw,fill=blue!10,minimum size=7mm,inner sep=0pt] (3) at (1.3,0) {$3$};
\draw[->] (1) -- (2);
\draw[->] (1) -- (3);
\draw[->] (2) -- (3);
\node at (0,2.9) {\footnotesize original digraph: $3$ edges};
\end{tikzpicture}
\hspace{1.5cm}
\begin{tikzpicture}[scale=1,>=Stealth,every node/.style={font=\small}]
\node[circle,draw,fill=blue!10,minimum size=7mm,inner sep=0pt] (1) at (0,2) {$1$};
\node[circle,draw,fill=blue!10,minimum size=7mm,inner sep=0pt] (2) at (-1.3,0) {$2$};
\node[circle,draw,fill=blue!10,minimum size=7mm,inner sep=0pt] (3) at (1.3,0) {$3$};
\node[rectangle,draw,fill=red!25,minimum size=6.5mm,inner sep=1pt] (h1) at (0.9,1.3) {$h_1$};
\node[rectangle,draw,fill=cyan!25,minimum size=6.5mm,inner sep=1pt] (h2) at (0,-0.9) {$h_2$};
\draw[->,red!70!black] (1) -- (h1);
\draw[->,red!70!black] (h1) -- (2);
\draw[->,red!70!black] (h1) -- (3);
\draw[->,cyan!70!black] (2) -- (h2);
\draw[->,cyan!70!black] (h2) -- (3);
\node at (0,2.9) {\footnotesize hub extension: $2$ hubs, $2$ colors};
\end{tikzpicture}
\end{center}
 
For the digraph of Example \ref{ex:triangle},
\[
\bc(\sV,\sE)=2,\qquad \chibp(\sV,\sE)=2,\qquad \chi'(\sV,\sE)=2.
\]

\begin{proof}
\emph{Biclique cover, upper bound.} The bicliques $\{1\}\times\{2,3\}$ and
$\{2\}\times\{3\}$ lie in $\sE$ and their union is $\sE$, so
$\bc(\sV,\sE)\le2$.
 
\emph{Biclique cover, lower bound.} Suppose a single biclique $A\times
B\subseteq\sE$ covered all three edges. Covering $(1,2)$ forces $1\in A$,
$2\in B$; covering $(1,3)$ forces $1\in A$, $3\in B$; covering $(2,3)$ forces
$2\in A$, $3\in B$. Together $A\supseteq\{1,2\}$, $B\supseteq\{2,3\}$, so
$2\in A\cap B$ and the biclique condition would force $(2,2)\in\sE$---but
there are no self-loops. Hence no single biclique covers $\sE$, and
$\bc(\sV,\sE)\ge2$. Combining, $\bc(\sV,\sE)=2$.
 
\emph{Hub-chromatic index, lower bound.} Suppose some hub extension of
$(\sV,\sE)$ used only one color. Then every vertex has at most one outgoing
and at most one incoming edge in the extension. Node $1$ has out-degree $2$
in $\sE$ (to $2$ and to $3$), so both edges must be realized through $1$'s
unique outgoing hub $h$: $1\in A_h$, $\{2,3\}\subseteq B_h$. Node $3$ has
in-degree $2$ in $\sE$ (from $1$ and from $2$), so both edges must be
realized through $3$'s unique incoming hub $h'$: $\{1,2\}\subseteq A_{h'}$,
$3\in B_{h'}$. Since edge $(1,3)$ can only be realized via $1$'s unique
outgoing hub, $h'=h$; but then $A_h\supseteq\{1,2\}$, $B_h\supseteq\{2,3\}$,
so $2\in A_h\cap B_h$, forcing $(2,2)\in\sE$ by the hub axiom---impossible.
Hence $\chibp(\sV,\sE)\ge2$.
 
\emph{Hub-chromatic index, upper bound.} The extension with hubs
$h_1=(\{1\},\{2,3\})$ and $h_2=(\{2\},\{3\})$ realizes $\sE$; coloring $h_1$
and $h_2$ differently is trivially valid (each color class has only one
hub), so $\chibp(\sV,\sE)\le2$. Hence $\chibp(\sV,\sE)=2$.
 
\emph{Ordinary chromatic index.} Node $1$'s two outgoing edges force
$\chi'\ge2$; the coloring $(1,2)\mapsto1,(1,3)\mapsto2,(2,3)\mapsto1$ is
valid (no two edges sharing a source or a target receive equal colors), so
$\chi'(\sV,\sE)=2$.
\end{proof}

\begin{example}\label{ex:bipartite}
Let $\sV=A\sqcup B$ with $A=\{a_1,a_2,a_3\}$, $B=\{b_1,b_2,b_3,b_4\}$, and
$\sE=A\times B$: every node of $A$ attends to every node of $B$, a fully
dense, ``global'' attention block on seven tokens.
\end{example}
 
\begin{center}
\begin{tikzpicture}[scale=0.85,>=Stealth,every node/.style={font=\small}]
\foreach \i/\y in {1/3,2/2,3/1}
  \node[circle,draw,fill=blue!10,minimum size=7mm,inner sep=0pt] (a\i) at (0,\y) {$a_{\i}$};
\foreach \j/\y in {1/3.5,2/2.5,3/1.5,4/0.5}
  \node[circle,draw,fill=green!10,minimum size=7mm,inner sep=0pt] (b\j) at (3,\y) {$b_{\j}$};
\foreach \i in {1,2,3}
  \foreach \j in {1,2,3,4}
    \draw[->,thin,gray] (a\i) -- (b\j);
\node at (1.5,4.1) {\footnotesize original digraph: $12$ edges};
\end{tikzpicture}
\hspace{1.2cm}
\begin{tikzpicture}[scale=0.85,>=Stealth,every node/.style={font=\small}]
\foreach \i/\y in {1/3,2/2,3/1}
  \node[circle,draw,fill=blue!10,minimum size=7mm,inner sep=0pt] (a\i) at (0,\y) {$a_{\i}$};
\foreach \j/\y in {1/3.5,2/2.5,3/1.5,4/0.5}
  \node[circle,draw,fill=green!10,minimum size=7mm,inner sep=0pt] (b\j) at (3,\y) {$b_{\j}$};
\node[rectangle,draw,fill=orange!30,minimum size=7mm,inner sep=1pt] (h) at (1.5,2) {$h$};
\foreach \i in {1,2,3}
  \draw[->] (a\i) -- (h);
\foreach \j in {1,2,3,4}
  \draw[->] (h) -- (b\j);
\node at (1.5,4.1) {\footnotesize hub extension: $1$ hub, $7$ edges};
\end{tikzpicture}
\end{center}

For this digraph,
\[
\bc(\sV,\sE)=1,\qquad \chibp(\sV,\sE)=1,\qquad \chi'(\sV,\sE)=4.
\]
 
\begin{proof}
\emph{Biclique cover.} The biclique $A\times B$ is contained in $\sE$ by
definition and its union is all of $\sE$, so $\bc(\sV,\sE)\le1$; since
$\sE\ne\emptyset$, $\bc(\sV,\sE)\ge1$. Hence $\bc(\sV,\sE)=1$, achieved by the
extension with a single hub $h$ and edges $(a_i,h)$, $(h,b_j)$ for all $i,j$.
 
\emph{Hub-chromatic index.} A hub extension with a single hub has no other
hub to conflict with, so one color suffices: $\chibp(\sV,\sE)\le1$. As
$\chibp\ge1$ for any nonempty digraph, $\chibp(\sV,\sE)=1$.
 
\emph{Ordinary chromatic index.} Viewed as a bipartite graph, every node of
$A$ has degree $4$ and every node of $B$ has degree $3$, so $\Delta=4$. By
König's edge-coloring theorem, bipartite graphs have $\chi'=\Delta$, so
$\chi'(\sV,\sE)=4$; concretely, coloring $(a_i,b_j)$ by $(i+j)\bmod 4$ gives a
valid $4$-coloring, and $3$ colors cannot suffice since $a_1$ alone has four
outgoing edges requiring four distinct colors.

\end{proof}

\paragraph{Graph theory literature .} The invariants above sit inside a well-developed combinatorial theory.
Covering the edges of a graph by bicliques descends from the Graham--Pollak
theorem, which shows that the edges of $K_n$ cannot be partitioned into fewer
than $n-1$ complete bipartite subgraphs~\cite{GrahamPollak1971}; the associated
biclique \emph{partition} number $\mathrm{bp}$ admits sharp algebraic lower
bounds through eigenvalue and Hermitian-rank arguments, developed by Kratzke,
Reznick, and West~\cite{KratzkeReznickWest1988} and by Gregory, Shader, and
Watts~\cite{GregoryShaderWatts1999}. Our $\bc(\sV,\sE)$ is the directed
\emph{cover} analogue and satisfies $\bc\le\mathrm{bp}$; note that such rank
lower bounds need not persist for covers. The hub extension of
Definition~\ref{def:hubextension} is the directed biclique analogue of a
set-intersection representation in the sense of Erd\H{o}s, Goodman, and
P\'osa~\cite{ErdosGoodmanPosa1966}, and Lemma~\ref{lem:biclique} identifies the
least number of hubs with $\bc(\sV,\sE)$. Since the conflict rule for the
ordinary chromatic index $\chi'(\sV,\sE)$ forbids only a shared source or a
shared target, that index is the chromatic index of a bipartite conflict graph;
K\H{o}nig's edge-coloring theorem~\cite{Konig1916} therefore gives the exact
value $\chi'(\sV,\sE)=\max(\Delta^{+},\Delta^{-})$, sharper than the general
$\Delta+1$ bound of Vizing~\cite{Vizing1964}. Finally, one always has
$\chibp(\sV,\sE)\le\chi'(\sV,\sE)$, while $\chi'$ may exceed $\chibp$ without
bound---already for a single directed biclique $A\times B$ one finds
$\chi'=\max(|A|,|B|)$ yet $\chibp=1$---so fixing the key matrix to the identity
forfeits the compression that hub extensions provide and forces reliance on the
larger invariant $\chi'$.

\subsection*{Universal Family of Interaction Laws}

Our goal is to construct a rich network of nonlocal, nonreciprocal interactions
$u\in\Gamma(E)$ (or $u^{t}_{\bz}$) admitting an efficient parametrization,
one that generalizes---and in part explains---the transformer. We consider
interaction laws of the form $u=\beta a$, with attention weight
$\beta\in C^{\infty}(\mathcal M\times\mathcal M)$ and $a\in\Gamma(E)$ satisfying
$\|a(x,y)\|_{T_y\mathcal M}=1$ everywhere (if $\mathcal M$ is not parallelizable,
this may fail on a set of arbitrarily small measure). Thus the strength of the
interaction is dictated by $\beta$.

\begin{definition}
A \emph{finite-rank interaction law} is of the form $u=\beta a$ where
$\beta(x,y)$ is a finite-rank tensor in
$C^{\infty}(\mathcal M)\otimes C^{\infty}(\mathcal M)\subset
C^{\infty}(\mathcal M\times\mathcal M)$. The \emph{rank} of $u=\beta a$ is the
rank of $\beta$ as a tensor in
$C^{\infty}(\mathcal M)\otimes C^{\infty}(\mathcal M)$.
\end{definition}

A strong interaction law at $(x_0,y_0)$ indicates that a particle at $x_0$ strongly
influences a particle at $y_0$ through $u(x_0,y_0)$. \\

\begin{definition}\label{def:strong}
An interaction law $u\in\Gamma(E)$ is \emph{strong} at
$(x_0,y_0)\in\mathcal M\times\mathcal M$ if
$\beta(x,y)=\|u(x,y)\|_{T_y\mathcal M}$ attains a local maximum at
$(x_0,y_0)$ and is not constant on any neighborhood of $(x_0,y_0)$; in particular,
there is a neighborhood of $(x_0,y_0)$ on which
\[
\|u(x,y)\|_{T_y\mathcal M}\le\|u(x_0,y_0)\|_{T_{y_0}\mathcal M}.
\]
\end{definition}

In our constructions the attention weight $\beta$ is nonnegative, being a sum of
products of the nonnegative kernel $\kk$ introduced below, so that
$\beta=|\beta|$; and the family $a_\theta$ satisfies
$\|a_\theta(x,y)\|_{T_y\mathcal M}=1$ near the point of interest
(see Lemma \ref{lem:isotropicfamily}). Consequently
$\|u(x,y)\|_{T_y\mathcal M}=|\beta(x,y)|$ locally, and whenever $|\beta|$ attains
a local maximum at $(x_0,y_0)$ and is nonconstant nearby, the law $u=\beta\,a_\theta$
is strong at $(x_0,y_0)$.

 An embedding of
nodes $\alpha:\sV\to\mathcal M$ induces an embedding $\widetilde\alpha$ of edges
into $\mathcal M\times\mathcal M$, sending $(a,b)\in\sE$ to
$(\alpha(a),\alpha(b))$. A function in $C^{\infty}(\mathcal M\times\mathcal M)$
that attains its maximum exactly on the image $\widetilde\alpha(\sE)$ thus
encodes the structure of $(\sV,\sE)$. Throughout, we fix a separation scale
$r_0>0$ and consider only \emph{admissible} embeddings: injective embeddings
$\alpha:\sV\to\mathcal M$ whose images are pairwise $r_0$-separated, so that
$d(\alpha(v),\alpha(w))>r_0$ for all distinct $v,w\in\sV$. Since $\sV$ is finite
and $\mathcal V$ discrete this is no loss of generality, and unless stated
otherwise every embedding below is assumed admissible.

\begin{definition}
Let $\alpha:\sV\to\mathcal M$ be an embedding of a digraph $(\sV,\sE)$. A
\emph{density cover} of $\alpha$ is a collection of smooth functions
$\beta_1,\ldots,\beta_h\in C^{\infty}(\mathcal M\times\mathcal M)$ such that for
every $(a,b)\in\sE$ there is an index $1\le j\le h$ for which $\beta_j$ attains a
local maximum at $(\alpha(a),\alpha(b))$ and is not constant on any neighborhood
of $(\alpha(a),\alpha(b))$.
\end{definition}

The embedding $\alpha$ above is typically chosen to meet criteria independent of
the attention graph. We say that a family of interaction laws
$\mathcal I\subset\Gamma(E)$ \emph{implements} an attention graph $(\sV,\sE)$ if
there are $u_1,\ldots,u_h\in\mathcal I$ whose norms
$\beta_j=\|u_j\|\in C^{\infty}(\mathcal M\times\mathcal M)$ form a density cover
of $(\sV,\sE)$.

\begin{definition}\label{def:universal}
Let $\mathcal M$ be a Riemannian manifold. A family of interaction laws
$\mathcal I\subset\Gamma(E)$ is \emph{universal} if
\begin{enumerate}
\item there is a constant $C>0$ such that every attention digraph $(\sV,\sE)$
with $|\sV|<C$ has a density cover in $\mathcal I$, for every admissible
embedding $\alpha:\sV\to\mathcal M$; and
\item for every $x_0,y_0\in\mathcal M$ and every $v_0\in T_{y_0}\mathcal M$ there
is $u\in\mathcal I$ that is strong at $(x_0,y_0)$ and satisfies
$u(x_0,y_0)=c\,v_0$ for some constant $c>0$.
\end{enumerate}
\end{definition}

\textbf{Remark.} What are mathematical problems that can be solved effectively by a transformer model? 
Consider the following concrete example involving digraphs. Let $p \in (0,1)$ be fixed, and 
let $N$ and $n$ be positive integers. We construct a random hub extension by independently 
connecting each of the $N$ non-hub nodes to each of the $n$ hubs with probability $p$. 
This random hub extension induces a unique random digraph $(\sV,\sE)$ on $N$ nodes (replacing hubs with bicliques). 
The problem is to find a parametrized family of functions that can represent a density 
cover of any such random digraph $(\sV,\sE)$ using only a small number of parameters. A similar problem can be stated in terms of family of interaction laws implementing a random digraph. \\

\subsection*{Main Theorem}

\begin{theorem}\label{thm:main}
Let $\mathcal M$ be a smooth Riemannian manifold.
\begin{enumerate}[label=\textup{(\roman*)}]
\item There exists a finitely parametrized family of interaction laws
$\mathcal I\subset\Gamma(E)$ that is universal.
\item Let $(\sV,\sE)$ be an attention graph with an admissible embedding
$\alpha:\sV\to\mathcal M$, and let $(\widetilde\sV,\widetilde\sE)$ be a colored
hub extension of $(\sV,\sE)$ with $n=|H|$ hub nodes and $h$ colors. Then there
is a density cover $\beta_1,\ldots,\beta_h\in C^{\infty}(\mathcal M\times\mathcal M)$
of $\alpha$ with $\operatorname{rank}(\beta_j)\le n$ for $1\le j\le h$.
\end{enumerate}
\end{theorem}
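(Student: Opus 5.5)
The plan is to build everything from a single nonnegative bump kernel $\kk$ on $\mathcal M\times\mathcal M$ and to read the ranks off the hub structure directly. Fix a smooth nonincreasing profile $\psi:[0,\infty)\to[0,1]$ with $\psi(0)=1$, $\psi'(s)<0$ for $0<s<1$, and $\psi\equiv0$ on $[1,\infty)$. For $p\in\mathcal M$ choose a radius $0<\rho(p)<r_0/2$ below the injectivity radius at $p$, and set
\[
\kk(x,p)=\psi\bigl(d(x,p)^2/\rho(p)^2\bigr).
\]
Since $d(\cdot,p)^2$ is smooth inside the injectivity radius, $x\mapsto\kk(x,p)$ is smooth. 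It has a strict global maximum at $x=p$, is nonconstant near $p$, and is supported in the ball $B(p,\rho(p))$. By admissibility of $\alpha$, the balls $B(\alpha(v),\rho(\alpha(v)))$ for $v\in\sV$ are pairwise disjoint. If one insists on a kernel independent of $p$, I would instead work in normal coordinates with a uniform radius on the compact set $\alpha(\sV)$. Either way, this is the only place where geometry enters.

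\emph{Part (ii).} By Lemma~\ref{lem:biclique}, each hub $k\in H$ corresponds to a biclique $A_k\times B_k\subseteq\sE$, where $A_k$ is the set of sources of $k$ and $B_k$ its set of targets. Define
\[
f_k(x)=\sum_{a\in A_k}\kk(x,\alpha(a)),\qquad g_k(y)=\sum_{b\in B_k}\kk(y,\alpha(b)).
\]
For a color $j$, set $\beta_j=\sum_{k\,:\,\mathrm{col}(k)=j} f_k\otimes g_k$. This is a sum of at most $n$ elementary tensors, so $\operatorname{rank}(\beta_j)\le n$ (indeed it is bounded by the size of the color class), and $\beta_j\ge0$.

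Now take an edge $(a,b)\in\sE$. By axiom~1 of Definition~\ref{def:hubextension} there is a hub $k$ with $a\in A_k$ and $b\in B_k$; let $j$ be its color. On the product neighborhood $U=B(\alpha(a),\rho)\times B(\alpha(b),\rho)$, disjointness of the balls gives $f_{k'}(x)=\kk(x,\alpha(a))\,\mathbf 1[a\in A_{k'}]$ and $g_{k'}(y)=\kk(y,\alpha(b))\,\mathbf 1[b\in B_{k'}]$ for every hub $k'$. The coloring rule says that no other hub of color $j$ shares the source $a$ with $k$ (nor the target $b$). Hence every other term of $\beta_j$ vanishes identically on $U$, and
\[
\beta_j(x,y)=\kk(x,\alpha(a))\,\kk(y,\alpha(b))\quad\text{on }U.
\]
This function has a strict local maximum at $(\alpha(a),\alpha(b))$ and is nonconstant there. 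So $\beta_1,\dots,\beta_h$ is a density cover.

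\emph{Part (i).} Take
\[
\mathcal I=\Bigl\{\,u=\beta\,a_\theta\;:\;\beta=\textstyle\sum_{k\le m} f_k\otimes g_k\,\Bigr\},
\]
where $f_k,g_k$ are sums of at most $m$ kernels $\kk(\cdot,p)$ with centers $p\in\mathcal M$, and $a_\theta$ is the unit-norm family of Lemma~\ref{lem:isotropicfamily}. This family is parametrized by finitely many points of $\mathcal M$ together with $\theta$, once we fix $m$ in terms of the constant $C$. For condition~1, let $|\sV|<C$. The trivial hub extension with one hub per edge has at most $C^2$ hubs. Giving each hub its own color, part~(ii) produces a density cover whose members are of this form, provided $m\ge C^2$. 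Each member $\beta$ is realized as the norm of $\beta\,a_\theta$ for any $\theta$, because $\|a_\theta\|=1$ near the relevant points. For condition~2, take $\beta=\kk(\cdot,x_0)\otimes\kk(\cdot,y_0)$ and choose $\theta$ with $a_\theta(x_0,y_0)=v_0/\|v_0\|$, which Lemma~\ref{lem:isotropicfamily} permits. Then $u$ is strong at $(x_0,y_0)$ by the discussion following Definition~\ref{def:strong}, and $u(x_0,y_0)=v_0/\|v_0\|$, so $c=1/\|v_0\|$. The degenerate case $v_0=0$ cannot be met with $c>0$ by a nonzero vector and would be excluded or handled by convention.

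\emph{Main obstacle.} The step requiring the most care is the uniform choice of $\kk$: it must be smooth, have a strict nondegenerate maximum, and have support radius below both $r_0/2$ and the injectivity radius, on a possibly noncompact $\mathcal M$. The second point of care is that the direction field $a_\theta$ must have unit norm near the chosen points, which Lemma~\ref{lem:isotropicfamily} is assumed to supply. The combinatorial heart, namely the vanishing of same-color cross terms on $U$, is immediate from the coloring rule.
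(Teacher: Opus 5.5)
Your proof is correct and is essentially the paper's. In part (ii), your $\beta_j=\sum_{k:\,\mathrm{col}(k)=j}f_k\otimes g_k$ is exactly the paper's $\langle\phi_j(x),\psi_j(y)\rangle$ written out in coordinates. The $e_k$-coefficient of $\phi_j$ is $f_k$ when $k$ has color $j$ and $0$ otherwise, and likewise for $\psi_j$ and $g_k$; the local computation is also the same. Your explicit kernel, with center-dependent radius $\rho(p)<\min(r_0/2,\operatorname{inj}(p))$ and smoothness required only in $x$, is more careful than the paper. The paper simply postulates a uniform smooth kernel and infers disjointness of the balls $B_r(\alpha(v))$ from $r<r_0$ alone. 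That inference needs $r\le r_0/2$, although only the weaker local vanishing is actually used.

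One claim in part (i) is false as written: $\beta\,a_\theta$ does not have norm $\beta$ near the relevant points ``for any $\theta$''. Lemma~\ref{lem:isotropicfamily} gives $\|a_\theta\|\equiv1$ only near the single center $p$ of $\theta=(p,a,r)$. For other $\theta$ the factor $\|\bv(\theta)_y\|$ may vanish at $\alpha(b)$ or have nonzero gradient there, which moves or destroys the maximum of $\|u\|$. The repair is immediate in your setup. Each $\beta_j=\kk(\cdot,\alpha(a_j))\otimes\kk(\cdot,\alpha(b_j))$ coming from the trivial extension has a single target, so you may take $\theta_j=(\alpha(b_j),a,r)$ with small $r$, separately for each $j$; your family allows $\theta$ to vary from law to law. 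This is the same device as the paper's per-edge $\theta_i$. The difference is that the paper packs all edges into one law $\sum_i\kk(\alpha(a_i),x)\kk(\alpha(b_i),y)a_{\theta_i}$, which is why it needs $N>C^2$. Your one-law-per-edge cover needs no such bound, but a single-$\theta$ law with several far-apart targets could not in general replace it. Your remark that condition (2) of universality cannot be met for $v_0=0$ is also right; the paper silently assumes $v_0\neq0$.
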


Minimizing over all hub extensions of $(\sV,\sE)$ and their colorings, the least
value of $n$ that occurs is the biclique cover number of $(\sV,\sE)$ (by Lemma
\ref{lem:biclique}), and the least value of $h$ is the hub-chromatic index
$\chibp(\sV,\sE)$. In general the two minima are attained by different hub
extensions and need not be realized simultaneously; part~(ii) therefore fixes a
single colored hub extension and reports the corresponding pair $(n,h)$.

The paper thus extends the transformer to an abstract geometric setting, with
several benefits. It gives an account of language models that is both
mathematically natural and accessible; it extends transformer-like models beyond
vector spaces to homogeneous spaces, allowing, for instance, analogues in which
tokens are points of a projective space or elements of a Grassmannian; and it
furnishes a rigorous framework in which statements about such models can be
formulated and proved.

\paragraph{Organization.}
The remainder of the introduction reviews related work, placing our single,
time-independent interaction law within the dynamical-systems view of
Transformers and the literature on the expressive power of attention.
Section~\ref{sec:main} then establishes the main result: \Cref{lem:biclique}
identifies the least number of hubs in a hub extension with the biclique cover
number; \Cref{lem:isotropicfamily} constructs, on any Riemannian manifold, a
finite-dimensional isotropic family of vector fields realizing an arbitrary
prescribed tangent direction with unit norm near a given point; and these are
assembled, through a compactly supported kernel $\kk$, into a proof of
\Cref{thm:main}. Its two parts establish universality of the family
$\mathcal I$ (part~(i)) and the rank bound $\operatorname{rank}\beta_j\le n$
governed by a colored hub extension (part~(ii)). Finally, Section~3 specializes
the framework to the standard key--query--value Transformer on $\R^{d}$---which,
after normalization, is naturally regarded as living on a high-dimensional
sphere.

\subsection{Related Work}

Our framework belongs to a now-substantial body of work viewing deep networks,
and Transformers in particular, through the lens of dynamical systems. Residual
networks can be read as forward-Euler discretizations of continuous-time
ordinary differential equations \cite{weinanE,haberruthotto}, a perspective made
precise in the infinite-depth limit by neural ordinary differential equations
\cite{neuralode}. For Transformers specifically, Lu et al.\ \cite{lu} read the
architecture as a multi-particle dynamical system, and Geshkovski et al.\
\cite{TR} develop a systematic theory of self-attention as an interacting
particle system, building on the original Transformer of Vaswani et al.\
\cite{vaswani}. Our evolution equations place a single, time-independent
interaction law within this lineage: rather than studying the flow of a full,
time-dependent multi-head attention stack, we isolate the expressive power of
one interaction law and ask which attention structures it can realize.

A second line of work concerns the expressive power of attention. Yun et al.\
\cite{yun-ua} prove that Transformers are universal approximators of continuous
permutation-equivariant sequence-to-sequence functions on a compact domain, and
of arbitrary continuous such functions once positional encodings are added,
separating the role of the self-attention layers, which realize contextual maps,
from that of the feed-forward layers. Closer to the combinatorial content of the
present paper, Yun et al.\ \cite{yun-sparse} show that sparse attention with only
$O(n)$ connections per layer already suffices for universal approximation, the
guarantee being expressed through connectivity conditions on the attention
pattern. Our results address a complementary question: rather than approximating
a target function, we ask how efficiently a prescribed attention digraph can be
\emph{realized}, and we frame the cost directly in terms of that graph, through
hub extensions, the biclique cover number, and the hub-chromatic index. We are
not aware of a comparable graph-theoretic accounting of attention expressivity
in this literature.

Finally, our choice to work on a general manifold is motivated by the geometric
and optimal-transport structure of the interacting-particle picture. Geshkovski
et al.\ \cite{clusters} show that, for time-independent weights, tokens cluster
toward limiting configurations as $t\to\infty$, that the type of limiting object
is governed by the spectrum of the value matrix, and that in one dimension the
self-attention matrix converges to a low-rank Boolean matrix; the fine,
metastable structure of this convergence is analyzed further in \cite{metastability}.
These dynamics are naturally posed on the sphere, and their analysis draws on
connections to the Kuramoto model of coupled oscillators \cite{kuramoto} and to
Wasserstein gradient flows. The latter connection is made explicit by Sander et
al.\ \cite{sinkformers}, who replace the softmax normalization by Sinkhorn's
algorithm to obtain doubly-stochastic attention, and show that the resulting
iterations form a discretized Wasserstein gradient flow, and a heat diffusion in
an infinite-sample limit. The value matrix, kernel, and normalization appearing
in our construction are the manifold analogues of these objects, which is one
reason the abstract geometric setting is natural.

\section{Main Result}\label{sec:main}


\begin{lemma}[Hub extension and biclique cover]\label{lem:biclique}
For any digraph $(\sV,\sE)$, the least number of hubs in a hub extension equals
the biclique cover number of $(\sV,\sE)$.
\end{lemma}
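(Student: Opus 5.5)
The plan is to prove the two inequalities separately, using the dictionary between hubs and bicliques. For a hub $h$ in a hub extension, set
\[
A_h=\{a\in\sV:(a,h)\in\widetilde\sE\},\qquad B_h=\{b\in\sV:(h,b)\in\widetilde\sE\}.
\]
Condition~1 of Definition~\ref{def:hubextension}, read from right to left, gives $A_h\times B_h\subseteq\sE$, so each $(A_h,B_h)$ is a biclique of $(\sV,\sE)$.

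For the inequality $\bc(\sV,\sE)\le\min|H|$, take any hub extension with hub set $H$. The "only if" direction of condition~1 says that every edge $(a,b)\in\sE$ has some hub $h$ with $a\in A_h$ and $b\in B_h$. Hence $\sE=\bigcup_{h\in H}A_h\times B_h$. This is a cover of $\sE$ by $|H|$ bicliques, so $\bc\le|H|$.

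For the reverse inequality, start from a minimal biclique cover $A_1\times B_1,\dots,A_k\times B_k$ with $k=\bc(\sV,\sE)$. Introduce $k$ new hub vertices $h_1,\dots,h_k$, disjoint from $\sV$. Let $\widetilde\sE$ consist of the edges $(a,h_i)$ for $a\in A_i$ and $(h_i,b)$ for $b\in B_i$. Condition~2 holds by construction. For condition~1, first note that a path $a\to h\to b$ in $\widetilde\sE$ can only occur with $h=h_i$, $a\in A_i$, $b\in B_i$. Such a path gives $(a,b)\in A_i\times B_i\subseteq\sE$. Conversely, every edge of $\sE$ lies in some $A_i\times B_i$, so it is realized through $h_i$. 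This yields a hub extension with exactly $k$ hubs, hence $\min|H|\le\bc$.

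The argument is essentially bookkeeping, and the only point needing care is a degenerate case. If $\sE=\emptyset$, then $\bc=0$ and the empty hub set works, so the equality still holds. One should also check that the hubs are genuinely new vertices and not elements of $\sV$; this ensures condition~2 and the two-step path characterization are not disturbed by edges between original vertices.

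Finally, I would note that bicliques with $A_i$ or $B_i$ empty never need to appear in a minimal cover, and that a hub may be isolated without harm. Neither affects the count.
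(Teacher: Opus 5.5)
Your proof is correct and follows the paper's argument essentially step for step: the in- and out-neighbourhoods $A_h,B_h$ of each hub give a biclique cover with $|H|$ members, and conversely one fresh hub per biclique of a minimal cover gives a hub extension. You are slightly more careful than the paper, since you explicitly use the ``if'' direction of condition~1 to verify $A_h\times B_h\subseteq\sE$, and you state the second inequality in the right form (least number of hubs $\le\bc(\sV,\sE)$), whereas the paper's final line misstates it as $\bc(\sV,\sE)\ge n$.
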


\begin{proof}
Write $\operatorname{bc}(\sV,\sE)$ for the biclique cover number, the least
number of directed bicliques needed to cover $\sE$; by a directed biclique we
mean a product $A\times B\subseteq\sE$. We prove both inequalities.

\emph{Hub extension $\Rightarrow$ biclique cover.} Let
$(\widetilde\sV,\widetilde\sE)$ be a hub extension with hub set
$H=\{h_1,\ldots,h_n\}$, and set
\[
A_i:=\{v\in\sV:(v,h_i)\in\widetilde\sE\},\qquad
B_i:=\{v\in\sV:(h_i,v)\in\widetilde\sE\}.
\]
By the hub-extension axiom, every $(a,b)\in\sE$ is realized by some hub, so
$(a,b)\in A_i\times B_i$ for some $i$. Hence
$\{A_1\times B_1,\ldots,A_n\times B_n\}$ covers $\sE$ and
$\operatorname{bc}(\sV,\sE)\le n$.

\emph{Biclique cover $\Rightarrow$ hub extension.} Conversely, let
$\{A_1\times B_1,\ldots,A_n\times B_n\}$ be a biclique cover of $\sE$. Put
$\widetilde\sV=\sV\cup H$ with $H=\{h_1,\ldots,h_n\}$ and
\[
\widetilde\sE:=\{(v,h_i):v\in A_i\}\cup\{(h_i,v):v\in B_i\},
\qquad i\in\{1,\ldots,n\}.
\]
Then (i) every $(a,b)\in\sE$ lies in some $A_i\times B_i$, so $(a,h_i)$ and
$(h_i,b)$ lie in $\widetilde\sE$; conversely, if
$(v,h_i),(h_i,b)\in\widetilde\sE$ then $(v,b)\in A_i\times B_i\subseteq\sE$, so
no edges outside $\sE$ are realized. Moreover (ii) every edge of $\widetilde\sE$
has the form $(v,h)$ or $(h,v)$. Thus $(\widetilde\sV,\widetilde\sE)$ is a hub
extension with $n$ hubs, whence $\bc(\sV,\sE)\ge n$ after
minimizing over hub extensions. Combining the two inequalities gives the claim.
\end{proof}

\begin{lemma}\label{lem:isotropicfamily}
Let $(\mathcal M,g)$ be a smooth Riemannian manifold. Then there is a
finite-dimensional family of vector fields
\[
\bv:\Theta\to\Gamma(T\mathcal M)
\]
such that for every $y_0\in\mathcal M$ and every
$0\ne v_0\in T_{y_0}\mathcal M$ there is $\theta\in\Theta$ with
\[
\bv(\theta)_{y_0}=\operatorname{const}\cdot v_0
\qquad\text{and}\qquad
\|\bv(\theta)_y\|_g=1
\]
for all $y$ in a neighborhood of $y_0$.
\end{lemma}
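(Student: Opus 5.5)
Since the lemma allows a different neighborhood for each $y_0$, the plan is to produce a finite-dimensional linear space of globally defined vector fields that span every tangent space, and then normalize. By the Whitney embedding theorem (or Nash, to keep the metric in view) there is a smooth embedding $\iota:\mathcal M\hookrightarrow\R^m$ for some finite $m$. Let $P_y:\R^m\to T_y\mathcal M$ be the $g$-orthogonal projection, obtained by composing $d\iota_y^{-1}$ with the projection of $\R^m$ onto $d\iota_y(T_y\mathcal M)$ that is orthogonal for the inner product $g$ transported by $d\iota_y$. For $w\in\R^m$ put $X_w(y):=P_y(w)$. This is a smooth vector field, linear in $w$, and for each $y$ the map $w\mapsto X_w(y)$ is onto $T_y\mathcal M$. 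Under an isometric Nash embedding, $P_y$ is simply the Euclidean orthogonal projection onto the tangent plane.

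Next I normalize. Take the parameter space
\[
\Theta:=S^{m-1}\subset\R^m,
\]
which is finite-dimensional, and define $\bv(w)_y:=X_w(y)/\|X_w(y)\|_g$ wherever $X_w(y)\neq0$. Given $y_0$ and $v_0\neq0$, choose $w:=d\iota_{y_0}(v_0)/|d\iota_{y_0}(v_0)|$. In the isometric case $P_{y_0}(w)=v_0/|v_0|$. In the Whitney case I instead pick $w$ with $P_{y_0}w$ a positive multiple of $v_0$, which is possible by surjectivity. Then $X_w(y_0)\ne0$, and by continuity $X_w$ is nonvanishing on a neighborhood $U$ of $y_0$. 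On $U$ the normalized field is smooth, has unit norm, and satisfies $\bv(w)_{y_0}=\operatorname{const}\cdot v_0$ with a positive constant.

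The main obstacle is that $\bv(\theta)$ must be a global element of $\Gamma(T\mathcal M)$. The normalized field is undefined on the zero set $Z_w=\{y:P_y w=0\}$, which is closed and disjoint from $U$, and it cannot in general be extended while keeping unit norm (this is exactly the caveat the paper makes about non-parallelizable $\mathcal M$). To get a smooth global section I will replace the normalization by $X_w(y)/\phi(\|X_w(y)\|_g^2)^{1/2}$. Here $\phi$ is a fixed smooth function with $\phi(s)=s$ for $s\ge \delta$ and $\phi>0$ everywhere. To make this field smooth across $Z_w$, I will instead use $\psi(\|X_w\|^2)X_w$ with $\psi(s)=s^{-1/2}$ for $s\ge\delta$, with $\psi$ smooth on $[0,\infty)$. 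This $\psi$ is fixed once and for all, so the family stays finitely parametrized.

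This construction equals the unit normalization on $\{\|X_w\|^2\ge\delta\}$. For that set to contain a neighborhood of $y_0$, I use that $P_{y_0}w$ has norm bounded below, uniformly in the chosen $w$, on compact pieces. For noncompact $\mathcal M$ I will allow $\delta$ to be part of the parameter by setting $\Theta:=S^{m-1}\times(0,1)$, still finite-dimensional. Then I choose $\delta<\|X_w(y_0)\|^2$, and continuity gives the neighborhood on which $\|\bv(\theta)_y\|_g=1$.
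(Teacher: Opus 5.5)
Your proof is correct. Its first half is exactly the paper's: embed, project constant vectors $w$ to get fields $X_w$ that span every tangent space, and normalize near $y_0$. The two routes differ at the globalization step.

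The paper localizes in space. It adds a point $p$ and a radius $r$ to the parameter, restricts to triples $(p,a,r)$ for which $X_a$ has no zero on the extrinsic ball $|\iota(y)-\iota(p)|\le r$, and divides by $\bigl(\|X_a\|_g^2+(1-\chi(|\iota(y)-\iota(p)|^2/r^2))^2\bigr)^{1/2}$. This denominator is positive everywhere and equals $\|X_a\|_g$ on that ball. You instead smooth the normalization itself, taking $\psi(\|X_w\|_g^2)X_w$ with $\psi(s)=s^{-1/2}$ for $s\ge\delta$, so no spatial cutoff is needed.

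Each route buys something different:
\begin{itemize}
\item Your version gives an unconstrained parameter space of dimension $m$ that contains no points of $\mathcal M$, with no membership condition to check.
\item The paper's version prescribes where the unit-norm property holds (the ball around $p$) and gives $\|\bv(\theta)\|_g\le1$ everywhere.
\item In your version the unit-norm region is the superlevel set $\{\|X_w\|_g^2\ge\delta\}$, which is dictated by the embedding. Below it the norm is $\sqrt{s}\,\psi(s)$, which you would have to cap by choosing $\psi(s)\le s^{-1/2}$ if you wanted the same bound.
\end{itemize}

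Two small repairs are needed.
\begin{itemize}
\item A ``$g$-orthogonal projection of $\R^m$ onto $d\iota_y(T_y\mathcal M)$'' is not defined, since $g$ supplies an inner product only on that subspace. Use either the Euclidean orthogonal projection followed by $d\iota_y^{-1}$ (as the paper does) or the $g$-adjoint of $d\iota_y$. With the former, $P_{y_0}(d\iota_{y_0}v_0)=v_0$ already holds, so your separate Whitney-case choice of $w$ is unnecessary.
\item Once $\delta$ becomes a parameter, $\psi$ can no longer be ``fixed once and for all''; it must depend on $\delta$, for example $\psi_\delta(s)=\delta^{-1/2}\psi_1(s/\delta)$. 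A simpler fix is to drop the constraint $|w|=1$ and scale $w$ so that $\|X_w(y_0)\|_g=1$, which is what the paper's choice $a=v_0/\|v_0\|_g$ does. Then a single fixed $\psi$ with threshold $\delta<1$ works on every $\mathcal M$, compact or not. With a Nash isometric embedding, even $\Theta=S^{m-1}$ with a fixed $\delta<1$ suffices.
\end{itemize}
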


\begin{proof}
Fix a smooth embedding $\iota:\mathcal M\hookrightarrow\R^{K}$ for some finite
$K$. Identify $T_p\mathcal M$ with its image in $\R^{K}$, and let
$\Pi_p:\R^{K}\to T_p\mathcal M$ be the Euclidean orthogonal projection, so that
$p\mapsto\Pi_p$ is smooth. For $a\in\R^{K}$ set
\[
X_a(p):=\Pi_p(a).
\]
Then $X_a\in\Gamma(T\mathcal M)$, and the fields $X_a$ span each tangent space
pointwise.

Choose $\chi\in C^{\infty}([0,\infty);[0,1])$ with $\chi\equiv1$ on $[0,1]$ and
$\chi<1$ on $(1,\infty)$. Let $\Theta$ be the set of triples
$(p,a,r)\in\mathcal M\times\R^{K}\times(0,\infty)$ with
\[
X_a(y)\ne0\qquad\text{whenever}\qquad|\iota(y)-\iota(p)|\le r.
\]
For $\theta=(p,a,r)\in\Theta$ define
\[
\bv(\theta)_y:=
\frac{X_a(y)}
{\Bigl(\|X_a(y)\|_g^2+\bigl(1-\chi(|\iota(y)-\iota(p)|^2/r^2)\bigr)^2\Bigr)^{1/2}}.
\]
The denominator never vanishes: on $\{|\iota(y)-\iota(p)|\le r\}$ this follows
from the definition of $\Theta$, and elsewhere from $\chi<1$. Hence
$\bv(\theta)$ is a smooth vector field.

Now fix $y_0\in\mathcal M$ and $0\ne v_0\in T_{y_0}\mathcal M$, and put
$a:=v_0/\|v_0\|_g\in T_{y_0}\mathcal M\subset\R^{K}$, so that $X_a(y_0)=a\ne0$.
By continuity $X_a$ is nonvanishing on some neighborhood $U$ of $y_0$, and
choosing $r>0$ small enough gives $|\iota(y)-\iota(y_0)|\le r\Rightarrow y\in U$;
thus $\theta=(y_0,a,r)\in\Theta$. For $y$ near $y_0$ the cutoff term equals $1$,
so
\[
\bv(\theta)_y=\frac{X_a(y)}{\|X_a(y)\|_g},
\]
whence $\|\bv(\theta)_y\|_g=1$ near $y_0$, while at $y_0$,
\[
\bv(\theta)_{y_0}=\frac{X_a(y_0)}{\|X_a(y_0)\|_g}=a=\frac{v_0}{\|v_0\|_g}.
\]
This proves the claim.
\end{proof}

\subsection*{Proof of the main theorem}

The idea is to build families of interaction laws from a vector bundle $P$ over
$\mathcal M\times\mathcal M$ (equivalently, from vector-valued functions on
$\mathcal M$), taking the attention weights to be pairings of sections,
$\langle\phi,\psi\rangle$ with $\phi,\psi\in\Gamma(P)$. For the construction to
be efficient on a high-dimensional manifold, we want the fiber dimension of $P$
to be far smaller than $\dim\mathcal M$ while still implementing a general
attention digraph; this is the content of part~(ii).

\begin{proof}[Proof of \cref{thm:main}]
Throughout, let $\kk:\mathcal M\times\mathcal M\to\R$ be a symmetric nonnegative
function $\kk\in C^{\infty}(\mathcal M\times\mathcal M)$ such that:
\begin{enumerate}
\item $\kk$ attains its local maxima on the diagonal
$\Delta_{\mathcal M}=\{(x,x):x\in\mathcal M\}$, with $\kk(x,x)=1$;
\item $\kk$ is strictly radially decreasing with respect to the Riemannian distance $d$:
if $d(x,y)\le d(x,y')$ then $\kk(x,y')\le\kk(x,y)$;
\item $\kk$ has \emph{effective radius} $r$: for every $p\in\mathcal M$ the
function $\kk(\cdot,p)$ is supported in the ball $B_r(p)$, i.e.\ $\kk(x,p)=0$
whenever $d(x,p)\ge r$. We fix $r<r_0$, with $r_0$ the separation scale of
admissible embeddings.
\end{enumerate}
Given a digraph $(\sV,\sE)$ and an admissible embedding
$\alpha:\sV\to\mathcal M$, distinct nodes satisfy
$d(\alpha(v),\alpha(w))>r_0>r$, so property~(3) gives
$\kk(\alpha(v),\alpha(w))=0$ for all distinct $v,w\in\sV$; in particular
$\kk(\alpha(a),\alpha(b))=0$ for every edge $(a,b)\in\sE$, and the balls
$B_r(\alpha(v))$, $v\in\sV$, are pairwise disjoint.

\medskip
\noindent\emph{Part (i).}
Let $a_\theta\in\Gamma(E)$ be defined by $a_\theta(x,\cdot)=\bv(\theta)$, where
$\bv(\theta)$ is the vector field of Lemma \ref{lem:isotropicfamily}. For a fixed
$N\in\mathbb N$ set
\begin{equation}\label{eq:family}
\mathcal I=\Bigl\{u\in\Gamma(E):\,
u(x,y)=\sum_{i=1}^{m}\kk(x_i,x)\,\kk(y_i,y)\,a_{\theta_i}(x,y),\;
m<N,\ \theta_i \in\Theta,\ (x_i,y_i)\in\mathcal M\times\mathcal M\Bigr\}.
\end{equation}
This family is finitely parametrized, by $x_i,y_i$ and $\theta_i\in\Theta$. Let
$(\sV,\sE)$ be an attention digraph with
$\sE=\{(a_1,b_1),\ldots,(a_m,b_m)\}\subset\sV\times\sV$ and an admissible
embedding $\alpha:\sV\to\mathcal M$, and set
\[
u(x,y)=\sum_{k=1}^{m}\kk(\alpha(a_k),x)\,\kk(\alpha(b_k),y)\,a_{\theta_k}(x,y),
\qquad\theta\in\Theta.
\]
Then $u\in\mathcal I$. By the separation established above, at the point
$(\alpha(a_i),\alpha(b_i))$ every summand with index $k\ne i$ vanishes---each
such edge differs from $(a_i,b_i)$ in at least one endpoint, and $\kk$ vanishes
between distinct nodes so
\[
u(\alpha(a_i),\alpha(b_i))=a_{\theta_i}(\alpha(a_i),\alpha(b_i)),
\]
and $\|u(x,y)\|$ attains a local maximum at each such $(\alpha(a_i),\alpha(b_i))$. We assume that $\theta_i$ is chosen so that $\|a_{\theta_i}\|\equiv1$ near $\alpha(b_i)$, and strength follows from the remark after Definition
\ref{def:strong}.

Now let $x_0,y_0\in\mathcal M$ and $0\ne v_0\in T_{y_0}\mathcal M$ be given, and
consider $u(x,y)=\beta(x,y)\,a_\theta(x,y)$ with $\beta(x,y)=\kk(x,x_0)\kk(y,y_0)$.
By Lemma \ref{lem:isotropicfamily} choose $\theta$ with $\|a_\theta\|\equiv1$ near
$y_0$ and $a_\theta(x_0,y_0)=\bv(\theta)_{y_0}=\operatorname{const}\cdot v_0$.
Then $\|u\|=\beta$ near $(x_0,y_0)$; since
$\beta=\kk(\cdot,x_0)\kk(\cdot,y_0)$ attains a local maximum at $(x_0,y_0)$ and
is nonconstant nearby, $u$ is strong at $(x_0,y_0)$, and moreover
$u(x_0,y_0)=a_\theta(x_0,y_0)=\operatorname{const}\cdot v_0$. A digraph with
$|\sV|<C$ has fewer than $C^2$ edges, so taking $N>C^2$ ensures $m=|\sE|<N$ and
$u\in\mathcal I$; this determines the constant $C$ in Definition \ref{def:universal}(1).
Hence $\mathcal I$ is universal.

\medskip
\noindent\emph{Part (ii).}
First observe that if $\phi,\psi\in C^{\infty}(\mathcal M;\R^{n})$ then
$\beta(x,y)=\langle\phi(x),\psi(y)\rangle$ has rank at most $n$. Let
$(\widetilde\sV,\widetilde\sE)$ be a colored hub extension of $(\sV,\sE)$ with
$\widetilde\sV=\sV\cup H$, where $H$ is the hub set, $n=|H|$, and the coloring
uses $h$ colors. Identify the real-valued functions on $H$ with $\R^{n}$ via an
orthonormal basis $\{e_a:a\in H\}$.

Fix a color $1\le j\le h$. For $v\in\sV$, let $s(v)\in H$ (resp.\ $t(v)\in H$) be
the unique source (resp.\ target) hub of $v$ among the color-$j$ edges;
uniqueness follows from the coloring rule, since at most one color-$j$ edge
leaves, and at most one enters, each non-hub node. If $v$ has no outgoing
(resp.\ incoming) color-$j$ edge, set $e_{s(v)}:=0$ (resp.\ $e_{t(v)}:=0$), so
that the sums below are unambiguous. Set
$\beta_j(x,y)=\langle\phi_j(x),\psi_j(y)\rangle$ with
\[
\phi_j(x)=\sum_{v\in\sV}\kk(x,\alpha(v))\,e_{s(v)},
\qquad
\psi_j(y)=\sum_{v\in\sV}\kk(y,\alpha(v))\,e_{t(v)},
\]
noting that $s(\cdot),t(\cdot)$, and hence $\phi_j,\psi_j$, all depend on $j$.
Using the properties of $\kk$ we compute
\begin{align*}
\beta_j(x,y)
&=\langle\phi_j(x),\psi_j(y)\rangle\\
&=\Bigl\langle\sum_{v\in\sV}\kk(x,\alpha(v))\,e_{s(v)},\,
\sum_{v'\in\sV}\kk(y,\alpha(v'))\,e_{t(v')}\Bigr\rangle\\
&=\sum_{v\in\sV}\sum_{v'\in\sV}\kk(x,\alpha(v))\,\kk(y,\alpha(v'))\,
\langle e_{s(v)},e_{t(v')}\rangle\\
&=\sum_{(v,v')\in\sE^{(j)}}\kk(x,\alpha(v))\,\kk(y,\alpha(v')),
\end{align*}
where $\sE^{(j)}$ is the set of edges $(v,v')\in\sE$ whose realizing hub path has
color $j$. The last equality holds because
$\langle e_{s(v)},e_{t(v')}\rangle=1$ if and only if $s(v)=t(v')$, and
$s(v)=t(v')$ if and only if there is a hub $h$ to which $v$ has a color-$j$
outgoing edge and from which $v'$ has a color-$j$ incoming edge; this is exactly
the condition for $(v,v')$ to be realized by a monochromatic (color-$j$) path in
the hub extension, i.e.\ $(v,v')\in\sE^{(j)}$.

Each product $\kk(x,\alpha(v))\kk(y,\alpha(v'))$ is supported in
$B_r(\alpha(v))\times B_r(\alpha(v'))$; since the balls $B_r(\alpha(w))$,
$w\in\sV$, are pairwise disjoint (admissibility, with $r<r_0$), near
$(\alpha(v),\alpha(v'))$ every term of the sum except the one indexed by
$(v,v')$ vanishes, so $\beta_j$ coincides locally with the single product
$\kk(x,\alpha(v))\kk(y,\alpha(v'))$. The latter attains a local maximum at
$(\alpha(v),\alpha(v'))$ and is nonconstant nearby; hence $\beta_j$ attains a
local maximum at $(\alpha(v),\alpha(v'))$ for every color-$j$ edge $(v,v')$.
Since every edge of $\sE$ is realized by some hub, and thus receives that hub's
color, the functions $\beta_1,\ldots,\beta_h$ form a density cover of
$(\sV,\sE)$, with $\operatorname{rank}\beta_j\le|H|=n$. Finally,
$u_j=\beta_j\,a_\theta\in\mathcal I$.
\end{proof}

\section{Standard Transformer Models}

We now recover the standard transformer as a special case. Although the
transformer is implemented on $\R^{d}$, the presence of normalization layers
(and the use of cosine similarity) makes it natural to regard it as modeled on a
high-dimensional sphere; the reader is referred to the discussion of the
interacting-particle system and the sphere formulation in Geshkovski et al.\
\cite{TR}.

A simple interacting-particle system with linear attention is
\[
\frac{dz_i}{dt}=\sum_{j=1}^i\langle Kz_j(t),Qz_i(t)\rangle\;Vz_j(t),
\]
with softmax version
\[
\frac{dz_i}{dt}=\sum_{j=1}^i\frac{1}{\mathcal Z_i}
\exp\langle Kz_j(t),Qz_i(t)\rangle\;Vz_j(t),
\qquad
\mathcal Z_i=\sum_{k=1}^i\exp\langle Kz_k(t),Qz_i(t)\rangle,
\]
where $Q,K,V$ are the query, key, and value matrices. The value matrix is taken
antisymmetric, so that it defines a vector field tangent to the sphere
(otherwise one projects onto the tangent space): if $V^{\top}=-V$ then
$\langle Vx,x\rangle=0$, so $Vx\in T_xS^{d-1}$. Comparing with the proof of
\cref{thm:main}, here $\phi(x)=Kx$ and $\psi(y)=Qy$; the parameter space
$\Theta$ is the space of matrices, with $\theta\in\Theta$ the value matrix
$V$, i.e.\ $a_\theta(x,y)=\bv(\theta)_y=\theta y$. The same formulation applies,
with almost no change, on the sphere, where the Euclidean fields are projected
to the tangent space.

The standard formulation suggests the kernel $\kk(x,y)=x\cdot y$. This linear
kernel is comparable to the one used in \cref{thm:main}---on the unit sphere it
is maximal on the diagonal and radially decreasing in the geodesic
distance---but it does not satisfy all the properties required there (it is
neither nonnegative, nor compactly supported, nor zero on edges), so the
discussion below is an analogy rather than a literal instance of the theorem. It
then suggests
\[
\phi(x)=\sum_{v\in\sV}(x\cdot\alpha(v))\,e_{s(v)},
\qquad
\psi(y)=\sum_{v\in\sV}(y\cdot\alpha(v))\,e_{t(v)};
\]
in other words the key and query matrices are
\[
K=\sum_{v\in\sV}e_{s(v)}\,\alpha(v)^{\top},
\qquad
Q=\sum_{v\in\sV}e_{t(v)}\,\alpha(v)^{\top},
\]
which are $|H|\times d$ matrices. The standard basis vectors $e_k$ are used only
for convenience; the construction works equally with any choice of
$u_k\in\R^{|H|}$, where $|H|$ is the number of hub nodes. In particular
$\operatorname{rank}K,\operatorname{rank}Q\le|H|$: although $K$ and $Q$ act on the
high-dimensional ambient space, they factor through the $|H|$-dimensional hub
space---exactly the efficient parametrization promised in the introduction.

We contend that hub extensions play a crucial, hidden role in the
key--query--value mechanism. Heuristically, a node $a\in\sV$ contributes a key to
a hub $p\in H$ through an edge $(a,p)\in\widetilde\sE$, while another node
$b\in\sV$ queries the same hub through an edge $(p,b)\in\widetilde\sE$. Suppose,
for instance, that the key matrix is fixed to the identity and only the query and
value matrices vary. Our constructions show that in this case one must color the
original attention digraph rather than a hub extension. Since the chromatic index
of $(\sV,\sE)$ can be far larger than its hub-chromatic index, realizing a given
attention graph in this setting can require a dramatically larger range
dimension for the query matrix.\\

\textbf{Remark.} Consider the case where the key matrix is fixed to $K=\mathrm{Id}$, so that $\phi(x)=x$ (see the proof of main theorem). Then
every node can have its own source hub ($s(v)=v$), so no two nodes share a source
hub and the source side of the hub extension collapses onto $\sV$ itself.
Realizing the attention digraph $(\sV,\sE)$ then amounts to a \emph{proper
edge-coloring} of $(\sV,\sE)$, that is, a coloring of its arcs in which any two
arcs with a common source, and any two arcs with a common target, receive
distinct colors. Writing $\chi'(\sV,\sE)$ for the least number of colors in such
an edge-coloring (equivalently, the chromatic number of the conflict graph on
$\sE$), one has
\[
\chibp(\sV,\sE)\le\chi'(\sV,\sE),
\]
since the hub extension with one hub per arc realizes exactly this
edge-coloring, whereas $\chibp$ is the minimum over \emph{all} hub extensions.
The inequality can be strict, with a large gap: an extension routing many arcs
through a shared hub may use far fewer colors than $\chi'$. Consequently, fixing
$K=\mathrm{Id}$ forgoes hub sharing and can force a number of colors---and hence
a range dimension for $Q$---dramatically larger than $\chibp(\sV,\sE)$.

\section*{Statements and Declarations}

\noindent\textbf{Competing Interests.} The author declares that there are no
financial or non-financial interests that are directly or indirectly related to
the work submitted for publication.

\noindent\textbf{Funding.} [State any funding sources and grant numbers here, or
state that no funding was received for conducting this study.]

\noindent\textbf{Data Availability.} [Not applicable, or state as appropriate;
this is a theoretical study and no datasets were generated or analysed.]

\end{document}